\author{Sara Lapan}
\address{Department of Mathematics\\ University of California\\ Riverside, CA 92521}
\email{sara.lapan@ucr.edu}
\author{Benjamin Linowitz}
\address{Department of Mathematics\\ Oberlin College\\ Oberlin, OH 44074}
\email{benjamin.linowitz@oberlin.edu}
\author{Jeffrey S. Meyer}
\address{Department of Mathematics\\ 
California State University\\ 
San Bernardino, CA 92407}
\email{jeffrey.meyer@csusb.edu}
\title{Universal systole bounds\\ for arithmetic locally symmetric spaces}
\DeclareMathAlphabet{\curly}{U}{rsfs}{m}{n}
\DeclareMathOperator{\Ad}{Ad}
\DeclareMathOperator{\disc}{disc}
\DeclareMathOperator{\GL}{GL}
\DeclareMathOperator{\isom}{Isom}
\DeclareMathOperator{\Mat}{M}
\DeclareMathOperator{\N}{N}
\DeclareMathOperator{\Res}{Res}
\DeclareMathOperator{\Reg}{reg}
\DeclareMathOperator{\SL}{SL}
\DeclareMathOperator{\stab}{Stab}
\DeclareMathOperator{\SO}{SO}
\DeclareMathOperator{\sys}{sys}
\DeclareMathOperator{\MM}{M}
\DeclareMathOperator{\tr}{tr}
\DeclareMathOperator{\Vol}{Vol}
\DeclareMathOperator{\arccosh}{arccosh}
\renewcommand*\env@matrix[1][*\c@MaxMatrixCols c]{%
  \hskip -\arraycolsep
  \let\@ifnextchar\new@ifnextchar
  \array{#1}}
\newtheorem{thm}{Theorem}[section]
\newtheorem{cor}[thm]{Corollary}
\newtheorem{prop}[thm]{Proposition}
\newtheorem{lem}[thm]{Lemma}
\theoremstyle{definition}
\newtheorem{rem}[thm]{Remark}
\newtheorem*{ack}{Acknowledgements}
\theoremstyle{remark}
\def\1{\mathbf{1}}
\def\disc{\mathrm{disc}}
\theoremstyle{plain}
\newtheorem{mainthm}{Theorem}
\theoremstyle{remark}
\theoremstyle{plain}
\def\C{\mathbf{C}}
\def\Q{\mathbf{Q}}
\def\R{\mathbf{R}}
\def\N{\mathbf{N}}
\def\Q{\mathbf{Q}}
\def\Z{\mathbf{Z}}
\def\1{\mathbf{1}}
\def\disc{\mathrm{disc}}
\newcommand{\abs}[1]{\left\vert#1\right\vert}
\def\moverlay{\mathpalette\mov@rlay}
\def\mov@rlay#1#2{\leavevmode\vtop{%
   \baselineskip\z@skip \lineskiplimit-\maxdimen
   \ialign{\hfil$\m@th#1##$\hfil\cr#2\crcr}}}
\newcommand{\charfusion}[3][\mathord]{
    #1{\ifx#1\mathop\vphantom{#2}\fi
        \mathpalette\mov@rlay{#2\cr#3}
      }
    \ifx#1\mathop\expandafter\displaylimits\fi}
\let\@@pmod\pmod
\DeclareRobustCommand{\pmod}{\@ifstar\@pmods\@@pmod}
\def\@pmods#1{\mkern4mu({\operator@font mod}\mkern 6mu#1)}
\numberwithin{equation}{section}
\begin{document}

\begin{abstract}
The systole of a closed Riemannian manifold is the minimal length of a non-contractible closed loop.
We give a uniform lower bound for the systole for large classes of  simple arithmetic locally symmetric orbifolds.
We establish new bounds for the translation length of semisimple $x\in\SL_n(\R)$ in terms of its associated Mahler measure.
We use these geometric methods to prove the existence of extensions of number fields in which fixed sets of primes have certain prescribed splitting behavior.
\end{abstract}

\maketitle 


\vspace{-2pc}
\section{Introduction}

The \textit{systole} of a closed Riemannian manifold is the minimal length of a non-contractible closed loop. 
In this paper, we investigate lower bounds for the systoles of simple arithmetic locally symmetric orbifolds.

To give some history and motivation, we begin by describing the situation in dimension 2.  
%
%
%
If $\Gamma \subset \SL_2(\R)$ is a lattice and $\mathbb{H}^2$ is the (real) hyperbolic plane, then $S:=\mathbb{H}^2/\Gamma$ is a hyperbolic surface (possibly with cusps or cone points).
An element $x\in\SL_2(\R)$ is \textit{hyperbolic} if it is diagonalizable with positive real eigenvalues. Let $\{a,a^{-1}\}$ (where $a>1$) denote the eigenvalues of $x$.
If $x\in \Gamma$, then it corresponds to a closed geodesic in $S$ with length 
\begin{equation}\label{eq:length_log}
\ell(x):=2\log|a|.
\end{equation}
It is well known (\cite[\S12.3]{MaR} or \cite[\S4]{NR}) that if $\Gamma$ is an arithmetic lattice derived from a quaternion algebra, 
then $a$ is an algebraic integer greater than 1 which is Galois conjugate to its inverse and for which all other Galois conjugates have norm 1, or in other words, $a$ is a \textit{Salem number}.
Conversely, Chinburg, Neumann, and Reid showed that every Salem number appears this way for some arithmetic hyperbolic surface.

The \textit{Mahler measure} of a monic polynomial $p(X)=\prod_{i=1}^n(X-a_i)\in \C[X]$ is the product $M(p):=\prod_{i=1}^n \max\{1,|a_i|\}$.
The Mahler measure is a fascinating function on polynomials that appears throughout algebra, geometry, and dynamics 
(see for example the surveys \cite{GH} and \cite{smyth}).
If $p_x(X)$ is the characteristic polynomial of $x$, then it follows that \eqref{eq:length_log} can be rewritten in terms of the Mahler measure of $p_x$,
\begin{equation}\label{eq:original_mahler_length}
\ell(x) = 2\log(M(p_x)).
\end{equation}

Equation \eqref{eq:original_mahler_length} provides a deep connection between length and the Mahler measure.
If $p$ is a cyclotomic polynomial, then $M(p)=1$.  
\textit{Lehmer's Problem} asks: is the Mahler measure of irreducible non-cyclotomic monic polynomials with integer coefficients bounded away from 1?  
The polynomial 
\begin{equation}\label{eq:lehmers_poly}
L(X) = X^{10}+X^9-X^7-X^6-X^5-X^4-X^3+X+1
\end{equation}
found by Lehmer (and now called Lehmer's polynomial) has Mahler measure  $M(L) \approx 1.1762808$. 
Currently $M(L)$ is the smallest known nontrivial Mahler measure and it is still an open question if in fact this is the smallest over all monic polynomials in $\Z[X]$.
It follows that Lehmer's problem implies the open \textit{Short Geodesic Problem}, which asks: is there a universal lower bound for the systoles of arithmetic hyperbolic surfaces?
%

In \cite{ERT}, Emery, Ratcliffe, and Tschantz extended the
identification \eqref{eq:original_mahler_length}  
to $n$-dimensional arithmetic hyperbolic orbifolds of simplest type.  
In particular, they were able to get systole bounds in terms of Salem numbers of bounded degree. 

%
%
%
%
%
%

Our goal in this paper is to generalize these results and establish lower bounds for broad classes of arithmetic locally symmetric spaces.
In order to achieve this goal we need a generalization of \eqref{eq:original_mahler_length} to semisimple $x\in\SL_n(\R)$.
In Section \ref{section:mahler}, we establish such a generalization with our Mahler-Length Bounds Theorem \ref{thm:sarasbounds} below.   


\begin{mainthm}[Mahler-Length Bounds]\label{thm:sarasbounds}
If $n\ge2$, $x\in \SL_n(\R)$ is semisimple, and $p_x$ denotes the characteristic polynomial of $x$, then
\begin{equation}\label{eq:sarasbounds}
2\sqrt{\frac{2}{n}}\log(M(p_x))
\leq\ell(x)\leq
 2 \log(M(p_x)).  
\end{equation}
Let $\{a_1,\ldots, a_n\}$ denote the eigenvalues of $x$. The lower bound is an equality exactly when there exists an $r\in\R$ such that for all $i$, either $|a_i|=r$ or $|a_i|^{-1}=r$,
and the upper bound is an equality when $\abs{a_i}=1$ for all but at most two of the $i$.
\end{mainthm}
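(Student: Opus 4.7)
The plan is to reduce both inequalities to the standard comparison between the $1$-norm and the $2$-norm of the vector $\lambda = (\log|a_1|,\ldots,\log|a_n|) \in \R^n$. The determinant condition $\det x = 1$ forces $\sum_i \lambda_i = 0$, which is the one constraint that drives everything.

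The substantive step is to obtain a clean formula for $\ell(x)$. Since $x$ is semisimple, its real Jordan decomposition $x = x_e x_h$ has commuting elliptic and hyperbolic parts; $x_e$ fixes a point on the translation axis of $x_h$, so $\ell(x) = \ell(x_h)$. After conjugating I may assume $x_h = \diag(|a_1|,\ldots,|a_n|)$, which acts as a pure translation by $\lambda$ along the maximal flat through the basepoint of $\SL_n(\R)/\SO(n)$. Normalizing the metric so that the $n=2$ case recovers the hyperbolic length $2\log|a|$ yields
\begin{equation*}
\ell(x) \;=\; \sqrt{2 \sum_{i=1}^n (\log|a_i|)^2}.
\end{equation*}

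Next, using $\sum_i\lambda_i=0$ to equate the sum of positive parts with the sum of absolute values of negative parts, I get $2\log M(p_x) = 2\sum_i\max(0,\lambda_i) = \sum_i|\lambda_i|$. Both inequalities in \eqref{eq:sarasbounds} then amount to the sandwich
\begin{equation*}
\tfrac{1}{\sqrt{n}}\sum_i|\lambda_i| \;\leq\; \Bigl(\sum_i\lambda_i^2\Bigr)^{\!1/2} \;\leq\; \tfrac{1}{\sqrt{2}}\sum_i|\lambda_i|.
\end{equation*}
The left inequality is Cauchy--Schwarz, with equality iff the $|\lambda_i|$ are all equal---precisely the condition that some $r\in\R$ satisfies $|a_i|=r$ or $|a_i|^{-1}=r$ for every $i$. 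For the right inequality I split the index set into $I_\pm=\{i:\pm\lambda_i>0\}$ and set $S_\pm=\pm\sum_{i\in I_\pm}\lambda_i$, so that $S_+=S_-$ by the constraint. The elementary bound $\sum x_j^2\leq(\sum x_j)^2$ for non-negative $x_j$, applied to $I_+$ and $I_-$ separately, gives $\sum_i\lambda_i^2 \leq S_+^2+S_-^2 = \tfrac{1}{2}\bigl(\sum_i|\lambda_i|\bigr)^2$, with equality iff $|I_+|,|I_-|\leq 1$; this is the same as $|a_i|=1$ for all but at most two indices $i$.

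The only nontrivial ingredient is the translation length formula; the rest of the argument is a short exercise in comparing the two norms of $\lambda$ subject to the single linear constraint $\sum_i\lambda_i=0$.
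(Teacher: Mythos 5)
Your proof is correct and follows essentially the same route as the paper's: reduce to the $1$-norm versus $2$-norm comparison of $\lambda=(\log|a_1|,\ldots,\log|a_n|)$ subject to $\sum_i\lambda_i=0$, with Cauchy--Schwarz giving the lower bound and the sign-split $S_+=S_-$ giving the upper bound (the paper packages the upper-bound step as $p_2=s_1^2-2s_2$ together with $s_2\ge S_+S_-$, which is algebraically identical to your direct estimate $\sum_i\lambda_i^2\le S_+^2+S_-^2$). You additionally sketch the derivation of the translation-length formula $\ell(x)=\sqrt{2\sum_i(\log|a_i|)^2}$, which the paper simply takes as the definition of $\ell(p)$ under the normalization \eqref{metric_eq}.
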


\begin{rem}
Let $x$ be such that in  \eqref{eq:sarasbounds} the upper bound is an equality and at least one of its eigenvalues has modulus not equal to 1.  
Since $p_x$ has real coefficients, if $a_i$ is complex, its complex conjugate $\overline{a_i}$ is also an eigenvalue.
By our assumption on $x$, it has precisely two eigenvalues of modulus not equal to 1, and as the eigenvalues must multiply to 1, it follows that $x$ or $-x$ has a unique real eigenvalue greater than 1, say $\alpha$. 
When $p_x$ has integer coefficients, then $\alpha$ is a Salem number. 
\end{rem}

\begin{rem}
Our Mahler-Length Bounds Theorem \ref{thm:sarasbounds} is a generalization of \eqref{eq:original_mahler_length} to degree $n\ge2$ just as our Trace-Length Bounds Theorem in \cite{LLM} was a generalization of the relationship $\ell(x)=2\arccosh(\frac{\tr{x}}{2})$ to degree $n\ge2$.
\end{rem}

In Section \ref{section:universal_bounds} we use our Mahler-Length Bounds Theorem \ref{thm:sarasbounds} to establish lower bounds for the systoles of simple arithmetic orbifolds.

\begin{mainthm}[Universal Systole Bounds]\label{thm:universalbound}
If $N$ is a simple arithmetic orbifold with field of definition $k$ and universal cover $\mathfrak{X}$, then 
\begin{equation}\label{eq:gen_systole_bound}
\sys(N,g)\ge \frac{2\sqrt{2}}{\sqrt{n}}f(n),
\end{equation}
where $\sys(N,g)$ denotes the systole of $N$ (as measured relative to the subspace metric \eqref{metric_eq}), $n=|k:\Q| \dim(\isom(\mathfrak{X}))$, and 
\begin{align}
f(n) = 
\begin{cases} 
\log(M(L))> 0.16235& \mbox{ if } n\le 20,\\
\frac{1}{4}\left(\frac{\log\log n}{\log n}\right)^3& \mbox{ if } n>20,
\end{cases}
\end{align}
where $L(X) = X^{10}+X^9-X^7-X^6-X^5-X^4-X^3+X+1$ is Lehmer's polynomial.
\end{mainthm}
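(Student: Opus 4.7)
The plan is to reduce the systole bound to a lower bound on the Mahler measure of an integer polynomial via our Mahler-Length Bounds Theorem \ref{thm:sarasbounds}, and then invoke known Mahler measure estimates.

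First, write $N = \mathfrak{X}/\Gamma$ for the appropriate arithmetic lattice $\Gamma$, and let $x \in \Gamma$ be an element realizing (or arbitrarily approximating) the systole. Since elliptic elements fix points of $\mathfrak{X}$ and so contribute length $0$, the systole must arise from a semisimple hyperbolic element $x$. I embed $\Gamma$ into $\SL_n(\Z)$ for $n = [k:\Q]\cdot\dim(\isom(\mathfrak{X}))$ by composing the adjoint representation $\Ad\colon G \to \SL(\frakg)$ of the underlying $k$-algebraic group $G$ with restriction of scalars $\Res_{k/\Q}$. Under this embedding, the characteristic polynomial $p_x$ is monic of degree $n$ with integer coefficients, and Theorem \ref{thm:sarasbounds} applied in $\SL_n(\R)$ yields
\[
\ell(x) \;\geq\; 2\sqrt{\tfrac{2}{n}}\,\log M(p_x).
\]
Here $\ell(x)$ is measured with respect to the subspace metric from \eqref{metric_eq}; compatibility of this metric with the natural invariant metric on the $\SL_n(\R)$-symmetric space is precisely the reason the statement is formulated with that specific metric.

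Next, I bound $M(p_x)$ from below. Because $x$ is hyperbolic and non-trivial, it has at least one eigenvalue of absolute value strictly greater than $1$, so $p_x$ is not a product of cyclotomic polynomials and $M(p_x)>1$. For small degrees $n \leq 20$ I appeal to the extensively verified range of Lehmer's conjecture (checked through degree well beyond $20$ by computational work of Mossinghoff, Rhin, Wu, and others): any non-cyclotomic monic integer polynomial of degree at most $20$ satisfies $M(p) \geq M(L)$, whence $\log M(p_x) \geq \log M(L) > 0.16235$. For $n > 20$ I invoke Dobrowolski's theorem in the form $\log M(p_x) \geq \tfrac{1}{4}(\log\log n/\log n)^3$. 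In either regime $\log M(p_x) \geq f(n)$, which combined with the Mahler-Length bound gives \eqref{eq:gen_systole_bound}.

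The principal obstacle is the first step: setting up the integral embedding carefully so that $p_x$ is genuinely an integer polynomial of degree exactly $n$, and comparing the translation length computed inside the symmetric space of $\SL_n(\R)$ with the intrinsic translation length on $\mathfrak{X}$ under the subspace metric. This is a question about the interplay between the $k$-structure on $G$, the integral structure on the adjoint representation, and restriction of scalars; it is standard in the arithmetic-groups literature but requires care to get the normalizations and the degree count right, and in particular it is the step that dictates the precise definition of ``simple arithmetic orbifold'' and of the subspace metric used in the statement. A smaller bookkeeping point is that $p_x$ need only be excluded from the set of cyclotomic polynomials, which for hyperbolic $x$ is automatic.
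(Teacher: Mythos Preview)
Your approach is essentially the paper's: embed via adjoint plus restriction of scalars into $\SL_n(\R)$, apply Theorem~\ref{thm:sarasbounds}, then invoke degree-dependent Mahler measure lower bounds (packaged in the paper as Proposition~\ref{prop_mahler_monotic_lower_bounds}, citing Boyd for $n\le 20$ and Voutier's constant $\tfrac14$ rather than Dobrowolski for $n>20$).

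One imprecision worth correcting: you assert an embedding of $\Gamma$ into $\SL_n(\Z)$, but this is generally false, since $\Gamma$ is only \emph{commensurable} with $\mathrm{G}(\mathcal{O}_k)$ and may properly contain it. The paper does not claim this; instead Proposition~\ref{prop:rep_char_poly} proves the weaker statement you actually need, namely $p_{\rho(x)}(X)\in\Z[X]$. The argument has two parts: first, since $\mathrm{G}$ is adjoint one has $x\in\mathrm{G}(k)$, and since some power $x^e$ lies in $\mathrm{G}(\mathcal{O}_k)$ the eigenvalues of $\Ad(x)$ are algebraic integers, giving $p_{\Ad(x)}\in\mathcal{O}_k[X]$; second, a Skolem--Noether comparison (Lemma~\ref{restriction_conjugate}) between the regular-representation embedding $\iota_1$ and the diagonal embedding $\iota_2$ across the real places of $k$ shows that $p_{\iota_1(\Ad(x))}$ has coefficients that are simultaneously rational and algebraic integers, hence in $\Z$. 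You correctly identify this as the principal obstacle, but the resolution is not an integral embedding of $\Gamma$ itself.
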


To prove this result, we show in Proposition \ref{prop:rep_char_poly} that the special linear group is ``universal'' in the sense that if $N\cong \mathfrak{X}/\Gamma$ where $\Gamma \subset \isom(\mathfrak{X})$ is arithmetic, then $\Gamma$ injects into a special linear group of explicitly bounded degree and each matrix in the image of this injection has a characteristic polynomial with integer coefficients. 

\begin{rem}
Fix a globally symmetric space $\mathfrak{X}$ of noncompact type.
When $N\cong \mathfrak{X}/\Gamma$ where $\Gamma$ is a \textit{principal arithmetic lattice} in the sense of \cite[3.4]{P}, then as the volume of $N$ increases, Prasad's volume formula \cite[3.11]{P} implies the degree $|k:\Q|$ increases, and hence $n$ in \eqref{eq:gen_systole_bound} increases, 
thereby decreasing the systole bound.  
Hence Theorem \ref{thm:universalbound} implies there is a fixed universal lower systole bound for orbifolds of bounded volume, but as volume is allowed to increase, this bound goes to 0.  
\end{rem}


%

In \cite[Theorem B]{LLM} we showed that every simple arithmetic $N$ has a finite degree cover $N'$ such that $\sys(N')\ge c_1\log(\Vol(N'))+c_2$ for certain constants $c_1$ and $c_2$.
In Section \ref{section:large_systole} we prove that certain covers will always attain systoles with order of magnitude log of volume.

\begin{mainthm}[Simple Arithmetic Manifolds with Large Systoles]\label{thm:magnitude}
If $N$ is a compact simple arithmetic manifold, then $N$ has a finite sheeted cover $N'$ whose systole has order of magnitude $\log(\Vol(N'))$.
\end{mainthm}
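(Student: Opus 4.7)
The plan is to combine the lower bound established in \cite[Theorem B]{LLM} with a universal geometric upper bound that holds for every compact locally symmetric manifold of noncompact type. Specifically, \cite[Theorem B]{LLM} already produces a finite-sheeted cover $N'$ of $N$ together with positive constants $c_1, c_2$ (depending on $N$) such that
\begin{equation*}
\sys(N') \;\geq\; c_1 \log(\Vol(N')) + c_2.
\end{equation*}
So the only remaining work is to prove a matching upper bound $\sys(N') \leq c_3 \log(\Vol(N')) + c_4$ valid for any compact locally symmetric manifold with universal cover $\mathfrak{X}$.

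For this upper bound I would argue geometrically. Since $\mathfrak{X}$ is a symmetric space of noncompact type, it has nonpositive sectional curvature and hence no conjugate points. In a compact manifold without conjugate points, the systole equals twice the injectivity radius, so for $r = \sys(N')/2$ a metric ball of radius $r$ embeds isometrically in $N'$ (for instance, centered at the midpoint of a shortest noncontractible geodesic). Lifting this ball to the universal cover gives
\begin{equation*}
\Vol(N') \;\geq\; \Vol_{\mathfrak{X}}(B_r).
\end{equation*}
Volumes of balls in a symmetric space of noncompact type grow at least exponentially in the radius: using the Harish-Chandra polar-coordinate decomposition, the Riemannian volume element is a product of hyperbolic-sine factors indexed by the positive restricted roots, yielding constants $A, h > 0$ (depending only on $\mathfrak{X}$) with $\Vol_{\mathfrak{X}}(B_r) \geq A\,e^{hr}$ for all sufficiently large $r$. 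Rearranging gives $\sys(N') = 2r \leq (2/h)\log(\Vol(N')) + O(1)$, which is the required upper bound.

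Chaining the two bounds yields $c_1 \log(\Vol(N')) + c_2 \leq \sys(N') \leq (2/h)\log(\Vol(N')) + O(1)$, so $\sys(N')$ is of order $\log(\Vol(N'))$, as claimed. The bulk of the substantive work — the lower bound, which relies on the arithmetic hypothesis and the Mahler-length machinery of Theorem~\ref{thm:sarasbounds} — is already handled by \cite[Theorem B]{LLM}; the new content here is packaging it with the universal upper bound. The main potential obstacles are (i) the embedded-ball step, which requires invoking that $\mathfrak{X}$ has no conjugate points and that the systolic geodesic realizes the injectivity radius, and (ii) the exponential volume growth, which must be stated uniformly across all symmetric spaces arising as universal covers of simple arithmetic manifolds. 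Both are classical and do not require arithmeticity.
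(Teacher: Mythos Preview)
Your proposal is correct and follows essentially the same route as the paper: invoke \cite[Theorem B]{LLM} for the lower bound, and for the upper bound compare the volume of a ball in $\mathfrak{X}$ to $\Vol(N')$, using exponential growth of ball volumes. The only cosmetic difference is that you phrase the comparison via the embedded ball at the injectivity radius, whereas the paper phrases it via the pigeonhole observation that once $\Vol_{\mathfrak{X}}(B_R) > \Vol(N')$ the projection $B_R \to N'$ cannot be injective, producing a $\gamma$ with $\ell(\gamma)\le 2R$; the paper then cites Knieper's asymptotic $\Vol_{\mathfrak{X}}(B_R)\sim R^{(r-1)/2}e^{hR}$ for the growth step where you invoke the polar decomposition directly.
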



%
%


A particularly interesting specialization of Theorem \ref{thm:universalbound} is to the case of special linear orbifolds derived from central simple algebras.
In Section \ref{section:speciallinearbackground} we give the construction of special linear orbifolds derived from central simple algebras and give an alternate proof of Theorem \ref{thm:systolebounds} below.

\begin{mainthm}[Standard Special Linear Short Geodesic Bounds]\label{thm:systolebounds}
If $M$ is a standard special linear orbifold of degree $n$, $n\ge 2$, which is derived from a central simple algebra, then
\begin{align}
\sys(M)\ge \frac{2\sqrt{2}}{\sqrt{n}}f(n),
\end{align}
where $f(n)$ is as in Theorem \ref{thm:universalbound}.
\end{mainthm}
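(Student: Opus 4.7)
The plan is to apply the Mahler-Length Bounds Theorem~\ref{thm:sarasbounds} directly to elements of $\Gamma$, exploiting the fact that the natural embedding of $\Gamma$ into $\SL_n(\R)$ already has integer-coefficient characteristic polynomials of the optimal degree $n$. This is tighter than what Theorem~\ref{thm:universalbound} would give by general principles, since applying Theorem~\ref{thm:universalbound} to the special linear symmetric space would insert $\dim(\isom(\mathfrak{X}))$ (which grows like $n^2$) in place of $n$ inside $f$.

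Concretely, write $M \cong \mathfrak{X}/\Gamma$ where $\Gamma$ is commensurable with the norm-one units $\mathcal{O}^1$ of an order $\mathcal{O}$ in a central simple algebra $A$ of degree $n$ (the standard setup of Section~\ref{section:speciallinearbackground}). For every $\gamma \in \Gamma$, the reduced characteristic polynomial $p_\gamma(X)$ lies in $\Z[X]$ and is monic of degree $n$, coinciding with the ordinary characteristic polynomial of $\gamma$ as a matrix in $\SL_n(\R)$ under the split archimedean embedding. Let $\gamma \in \Gamma$ realize $\sys(M)$; since elliptic elements contribute zero translation length, $\gamma$ must be loxodromic with $M(p_\gamma) > 1$. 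The lower bound in Theorem~\ref{thm:sarasbounds} then yields
\begin{equation*}
\sys(M) \;=\; \ell(\gamma) \;\ge\; 2\sqrt{\tfrac{2}{n}}\,\log M(p_\gamma).
\end{equation*}

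What remains is to bound $\log M(p_\gamma)$ below by $f(n)$, which is a purely number-theoretic task. Factoring $p_\gamma$ over $\Z[X]$ and using multiplicativity of the Mahler measure, cyclotomic factors contribute $1$, so the problem reduces to bounding $M(q)$ below for a non-cyclotomic monic irreducible $q \in \Z[X]$ of degree at most $n$. For $n \le 20$, we would invoke the computational tables of Mossinghoff and collaborators, which verify that no non-cyclotomic integer polynomial of degree at most $20$ has Mahler measure smaller than $M(L)$, giving $\log M(q) \ge \log M(L) > 0.16235$. For $n > 20$, we would apply Dobrowolski's theorem in Voutier's explicit form, which gives the unconditional bound $\log M(q) \ge \tfrac{1}{4}(\log\log n / \log n)^3$ on this range. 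Combining with the Mahler-Length inequality produces $\sys(M) \ge \tfrac{2\sqrt{2}}{\sqrt{n}}\,f(n)$.

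The main obstacle is the small-degree estimate: unlike Dobrowolski's asymptotic theorem, the claim that no non-cyclotomic integer polynomial of degree at most $20$ has Mahler measure below $M(L)$ is a delicate computational fact that must be cited carefully from the existing classification of small Mahler measures. A secondary point is verifying that the constant $\tfrac{1}{4}$ is valid for the Dobrowolski bound on the full range $n > 20$ (Voutier's theorem gives this for $n \ge 2$, so this is routine). The rest of the argument is bookkeeping: matching the degree of the reduced characteristic polynomial to the degree of the algebra, and ensuring loxodromy of the systole-realizing element, both of which follow from the standard structure theory.
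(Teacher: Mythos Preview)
Your approach is essentially identical to the paper's: both apply Theorem~\ref{thm:sarasbounds} directly to the degree-$n$ characteristic polynomial of a semisimple element of $\Gamma$, note that it lies in $\Z[X]$ (the paper cites \cite[Theorem~8.6]{Reiner}), and then invoke the Mahler-measure lower bound, which the paper has already packaged as Proposition~\ref{prop_mahler_monotic_lower_bounds} (citing Boyd rather than Mossinghoff for the degree~$\le 20$ range and Voutier for the asymptotic range). One minor discrepancy: the paper's definition of ``derived from a central simple algebra'' requires $\Gamma \subset h(\mathcal{O}^1)$ for a \emph{maximal} order, not merely commensurable, which makes the integrality of $p_\gamma$ immediate; under your weaker commensurability hypothesis you would need the extra step from Proposition~\ref{prop:rep_char_poly} to pass from $p_{\gamma^e}\in\Z[X]$ to $p_\gamma\in\Z[X]$.
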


%
%

Interestingly, the results of this paper can be used to prove a result from algebraic number theory: for any fixed totally real number field $k$ and finite set $S$ of primes of $k$, there exists a prime degree field extension of $k$ in which no prime of $S$ splits completely and whose norm of relative discriminant may be bounded from above. Although such a result follows from quantitative refinements of the (deep) Grunwald-Wang theorem (see \cite{effectiveGW} and \cite{Wang2}), we are able to give a short, geometrically-inspired proof.
%


\begin{mainthm}[Existence of Number Fields with Prescribed Arithmetic Properties]\label{thm:effectivegw}
Let $p>2$ be a prime number, $k$ be a totally real number field and $\mathfrak{p}_1,\dots, \mathfrak p_s$ be primes of $k$. There exists a field extension $L/k$ of degree $p$ in which none of the primes $\mathfrak p_i$ split completely and which has norm of relative discriminant bounded above by $c d_k^{c'} \left(\prod_{i=1}^s N_{k/\Q}(\mathfrak p_i)\right)^{c''}$ where $c,c',c''$ are positive constants depending only on $p$ and the degree over $\Q$ of $k$.
\end{mainthm}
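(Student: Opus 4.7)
The plan is to realize $L$ as $L = k[X]/(q(X))$ with
\[
q(X) = (X-1)^p + c,
\]
where $c$ is a nonzero element of $\mathfrak{n} := \prod_{i=1}^s \mathfrak{p}_i$ satisfying $v_{\mathfrak{p}_i}(c) = 1$ for every $i$ and having bounded archimedean absolute values. The geometric inspiration comes from our Mahler-Length Bounds Theorem \ref{thm:sarasbounds}: $q$ is the characteristic polynomial of its companion matrix, and $\log M(q) = \max(0, \log|c|)$ corresponds to the translation length of that matrix in the associated symmetric space, so the search for a small $c$ is the arithmetic shadow of searching for a short geodesic loop in an arithmetic locally symmetric space attached to $k$.

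First, I would apply Minkowski's lattice-point theorem to $\mathfrak{n}$ embedded in $\mathcal{O}_k \otimes_\Z \R \cong \R^{[k:\Q]}$ (the totally real hypothesis on $k$ yields the simpler Minkowski constant, with no complex places); this produces a nonzero $c \in \mathfrak{n}$ with
\[
\max_\sigma |c|_\sigma \leq C_1 \, d_k^{1/(2[k:\Q])} \, N_{k/\Q}(\mathfrak{n})^{1/[k:\Q]},
\]
for a constant $C_1$ depending only on $[k:\Q]$. A pigeonhole refinement, discarding the sublattices $\mathfrak{n}\mathfrak{p}_i \subset \mathfrak{n}$ (each of density $1/N_{k/\Q}(\mathfrak{p}_i)$), lets me additionally require $v_{\mathfrak{p}_i}(c) = 1$ for every $i$. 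The admissible lattice points have positive density $\prod_i(1 - 1/N_{k/\Q}(\mathfrak{p}_i))$, whose reciprocal is bounded above by $\prod_i N_{k/\Q}(\mathfrak{p}_i) = N_{k/\Q}(\mathfrak{n})$, so enforcing the local conditions inflates the bound on $|N_{k/\Q}(c)|$ by at most a factor of $N_{k/\Q}(\mathfrak{n})$.

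Next, I would verify the required properties of $L$. The substitution $Y = X-1$ converts $q$ into $Y^p + c$, which is Eisenstein at every $\mathfrak{p}_i$ because $v_{\mathfrak{p}_i}(c) = 1$; hence $q$ is irreducible over $k$ and each $\mathfrak{p}_i$ is totally ramified in $L$, so none splits completely. A standard resultant computation yields $\disc(q) = \pm\, p^p c^{p-1}$, so since $\mathfrak{d}_{L/k}$ divides $\disc(q)\mathcal{O}_k$,
\[
N_{k/\Q}(\mathfrak{d}_{L/k}) \leq p^{p[k:\Q]} \, |N_{k/\Q}(c)|^{p-1} \ll d_k^{(p-1)/2} \prod_{i=1}^s N_{k/\Q}(\mathfrak{p}_i)^{2(p-1)},
\]
matching the claim with constants $c, c', c''$ depending only on $p$ and $[k:\Q]$.

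The main obstacle is the pigeonhole step: enforcing the simultaneous local conditions $v_{\mathfrak{p}_i}(c) = 1$ at every $\mathfrak{p}_i$ without losing control of the archimedean Minkowski bound, uniformly in the number $s$ of primes. The admissible-density factor $\prod_i(1 - 1/N_{k/\Q}(\mathfrak{p}_i))$ degrades as $s$ grows, but its reciprocal is at most $N_{k/\Q}(\mathfrak{n})$, which is absorbed harmlessly into the exponent $c''$ of the final bound.
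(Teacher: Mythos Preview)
Your argument is essentially correct but takes a genuinely different route from the paper's. The paper constructs a degree-$p$ division algebra $B$ over $k$ ramified exactly at $\mathfrak{p}_1,\dots,\mathfrak{p}_s$, forms the arithmetic orbifold $M=\mathfrak{X}/\Gamma$ from a maximal order in $B$, chooses $\gamma\in\Gamma$ realizing the systole, and sets $L=k(\gamma)$. The upper bound on $\ell(\gamma)$ comes from Proposition~\ref{systoleupperbound} together with Prasad's volume formula, the lower bound comes from Theorem~\ref{thm:sarasbounds} combined with Silverman's inequality relating $M(p_\gamma)$ to $N_{k/\Q}(\Delta_{L/k})$, and the non-splitting of the $\mathfrak{p}_i$ is forced by the embedding theory of local division algebras. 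Your approach is instead a direct explicit construction via Minkowski and Eisenstein, with no division algebras, no orbifolds, and no genuine use of Theorem~\ref{thm:sarasbounds}. Your proof is more elementary and yields explicit exponents; the paper's proof is the advertised ``geometrically-inspired'' application of the systole machinery, which is precisely why the theorem is included here.

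Two remarks. First, the pigeonhole step is only heuristic as written: positive density in $\mathfrak{n}$ does not by itself produce a lattice point in a box of the stated size, because Minkowski's first theorem gives one point, not a count. A clean fix is to work in a single coset of $\mathfrak{n}^2$: pick $\alpha\in\mathcal{O}_k$ with $v_{\mathfrak{p}_i}(\alpha)=1$ via CRT, so every $c\in\alpha+\mathfrak{n}^2$ inherits this; then bound the covering radius of $\mathfrak{n}^2$ using $\lambda_1(\mathfrak{n}^2)\geq N(\mathfrak{n})^{2/[k:\Q]}$ together with Minkowski's second theorem, giving $\max_\sigma|c|_\sigma\ll_{[k:\Q]} d_k^{1/2}N(\mathfrak{n})^{2/[k:\Q]}$. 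The resulting exponent on $d_k$ becomes $[k:\Q](p-1)/2$ rather than your $(p-1)/2$, but this still depends only on $p$ and $[k:\Q]$, so the theorem is unaffected. Second, your ``geometric inspiration'' paragraph is not quite right and is in any case purely decorative: the companion matrix of $q$ has determinant $(-1)^p q(0)=1-c\neq 1$, so it does not sit in $\SL_p(\R)$, and the roots $1+\zeta^j(-c)^{1/p}$ do not have modulus $|c|^{1/p}$, so $\log M(q)=\max(0,\log|c|)$ fails. Since nothing in your actual argument depends on this, no harm is done.
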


\begin{ack}
The authors would like to thank Ralf Spatzier for his useful comments regarding the proof of Proposition \ref{systoleupperbound}. 
The work of the second author is partially supported by NSF Grant Number DMS-1905437. 
The third author acknowledges support from U.S. National Science Foundation grants DMS 1107452, 1107263, 1107367 ``RNMS: Geometric Structures and Representation Varieties'' (the GEAR Network).
\end{ack}

\section{Mahler Measure and Translation Length Inequalities - Theorem \ref{thm:sarasbounds}}\label{section:mahler}

Throughout this section, we will let $p(X)=\prod_{i=1}^n(X-a_i)\in \R[X]$ denote a monic degree $n$ polynomial.
We will prove results bounding its translation length \eqref{eq:poly_translation_length} by its Mahler measure (Theorem \ref{thm:sarasbounds_polyversion}), length, and discriminant (Corollary \ref{cor:sarabounds}).
By specializing these results to the case when $p(X)=p_x(X)$ is the characteristic polynomial for some semisimple $x\in\SL_n(\R)$, we obtain Theorem \ref{thm:sarasbounds}. 

We remind the reader that the \textit{Mahler measure} of $p$ is the product 
\begin{equation}
\displaystyle M(p):=\prod_{i=1}^n \max\{1,|a_i|\}.
\end{equation}
For a detailed survey on the Mahler measure, see \cite{smyth}.
While it is open as to whether the Mahler measure is uniformly bounded away from 1 for monic, irreducible, noncyclotomic polynomials in $\Z[X]$, we do have such a result when degree is fixed.

%
%

\begin{prop}\label{prop_mahler_monotic_lower_bounds}
There exists an explicit monotonically decreasing function on the domain of integers $n\ge2$, such that $f(n)>1$ and if $p(X)\in \Z[X]$ is a degree $n$, monic polynomial for which at least one of its irreducible factors is noncyclotomic, then $\log(M(p))\ge f(n)$.  Furthermore, $f(n)$ can be taken to be
\begin{equation}\label{eq:mahler_bounds}
f(n) = 
\begin{cases} 
\log(M(L))> 0.16235& \mbox{ if } n\le 20,\\
\frac{1}{4}\left(\frac{\log\log n}{\log n}\right)^3& \mbox{ if } n>20,
\end{cases}
\end{equation}
where $L(X) = X^{10}+X^9-X^7-X^6-X^5-X^4-X^3+X+1$ is Lehmer's polynomial.

\end{prop}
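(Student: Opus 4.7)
The plan is to reduce the statement to a bound on the Mahler measure of a single irreducible noncyclotomic factor and then invoke two standard results: an explicit computer-verified bound for small degrees and Voutier's effective version of Dobrowolski's theorem for large degrees.

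First I would use multiplicativity of the Mahler measure. If $p = q_1 q_2 \cdots q_r$ is the factorization of $p$ into monic irreducible factors in $\Z[X]$, then $M(p) = \prod_i M(q_i)$, and by Kronecker's theorem every such factor satisfies $M(q_i) \geq 1$. So $M(p) \geq M(q)$ for any irreducible factor $q$ of $p$, and by hypothesis we may choose $q$ to be noncyclotomic. Setting $d = \deg q \leq n$, it suffices to prove $\log M(q) \geq f(n)$.

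For the small-degree range $n \leq 20$, I would appeal to the well-known computational verification of Lehmer's conjecture through at least degree 40 (Boyd, Mossinghoff, and others): the minimum of $M(q)$ over all monic irreducible noncyclotomic $q \in \Z[X]$ of degree at most $20$ is attained by Lehmer's polynomial $L$, giving $\log M(q) \geq \log M(L) > 0.16235$. For $n > 20$, I would split into two subcases. If $d \leq 20$, the same computational bound gives $\log M(q) \geq \log M(L)$, which exceeds $f(n)$ once one verifies that $f(21) < 0.02 < 0.16235$ and that $f$ decreases thereafter. If $d > 20$, I would invoke Voutier's explicit form of Dobrowolski's theorem, namely
\begin{equation*}
\log M(q) \geq \frac{1}{4}\left(\frac{\log \log d}{\log d}\right)^3
\end{equation*}
for every monic irreducible noncyclotomic $q \in \Z[X]$ of degree $d \geq 2$, and then use the calculus fact that $x \mapsto (\log \log x)/(\log x)$ is decreasing on $x > e^e$ so that, since $d \leq n$ and $d > 20 > e^e$, the right-hand side is bounded below by $\frac{1}{4}(\log\log n / \log n)^3 = f(n)$.

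To finish I would verify monotonicity: $f$ is constant on $[2,20]$, drops at $n=21$, and is strictly decreasing on $[21,\infty)$ by the same derivative computation as above; hence $f$ is monotonically (non-strictly) decreasing throughout. The proposition then follows by combining the reduction with the case analysis.

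The main obstacle is purely expository rather than mathematical: the proof has essentially no new content of its own, and the real depth lies in the two cited ingredients. The computer-verified bound for small degree is the output of extensive searches for polynomials of small Mahler measure, and Voutier's bound refines the auxiliary polynomial argument of Dobrowolski to yield a fully explicit constant. The only genuine work here is gluing the two regimes together at $n = 20$ and checking the monotonicity, both of which are routine calculus.
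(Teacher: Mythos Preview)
Your proposal is correct and follows essentially the same route as the paper: reduce to a single irreducible noncyclotomic factor via multiplicativity of the Mahler measure, invoke Boyd's computational enumeration for degree at most $20$, invoke Voutier's explicit Dobrowolski-type bound for larger degree, and glue the two regimes using monotonicity of $f$. Your case analysis is slightly more explicit than the paper's (you separately handle $d\le 20$ and $d>20$ within the $n>20$ case), but the ingredients and logic are identical.
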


\begin{proof}
Lower bounds for when $p(X)$ is irreducible are well known, such as the bounds of Voutier \cite{V}
$f_{V}(n) =\frac{1}{4}\left(\frac{\log\log n}{\log n}\right)^3$.
 A check via Sage verifies that $f_{V}(15)<0.01245$, and $f_V(n)$ is monotonically decreasing for all $n\ge 15$.  
Meanwhile, Boyd \cite{Boyd1,Boyd2} enumerated all integer polynomials of degree $\le 20$ of small Mahler measure and verified that Lehmer's polynomial $L$ attains the minimal Mahler measure in this range.  
It can be computed in Sage that $\log(M(L))> 0.16235$.
We may take $f(x)$ to be the maximum of the two. 
When $p$ is not irreducible, the proposition follows from the fact that the log of the Mahler measure is additive together with the monotonicity of $f(n)$.
More precisely, if $p(X)=g_1(X)\cdots g_s(X)$, where the $g_i$ are irreducible of degree $n_i$, and without loss of generality, $g_1(X)$ is noncyclotomic, then 
$$\log(M(p))=\log(M(g_1))+\cdots +\log(M(g_s))\ge f(n_1)+\cdots +f(n_s)\ge f(n_1)\ge f(n).$$\end{proof}


We may write $p(X)$ as
\begin{align}\label{charpoly}
p(X) 
&=\sum_{j=0}^n (-1)^{j} s_jX^{n-j}
\end{align}
where $s_j$ denotes the $j^{th}$ \textit{elementary symmetric polynomial} in the roots of $p$. 
Let $p_k:=\sum_i a_i^k$. 
Newton's identities  \cite{Kalman} state that for all $1\le j\le n$, 
\begin{align}\label{newton}
%
js_j
=\sum_{i=1}^j (-1)^{i-1}s_{j-i} p_i.
\end{align}
%

The following result will be used in the proof of Theorem \ref{thm:sarasbounds} to bound the Mahler measure.
\begin{lem}\label{s1s2ineq} Suppose the polynomial $p(X)=\prod_{i=1}^n (X-c_i)=\sum_{j=0}^n (-1)^js_j X^{n-j}$ has all real zeros. The coefficient $s_j$ is the $j$th elementary symmetric polynomial in the zeros of $p$.  For $k\in\N$, let $p_k=\sum_{i=1}^n c_i^k$. Then: \begin{equation} 
 \frac{2n}{n-1}s_2\leq s_1^2\leq  n p_2
 \end{equation}
with equality on both sides exactly when $c_i=c_j$ for all $i,j$. If all $c_i\geq0$, then $p_2\leq s_1^2$ with equality exactly when $c_i=0$ for all $i$. 
\end{lem}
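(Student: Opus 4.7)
The key observation is that both claimed inequalities collapse, via Newton's identity, to a single application of the Cauchy--Schwarz inequality, so the proof should be very short. My plan is to extract the relation between $p_2$, $s_1$ and $s_2$ from Newton's identity (equation \eqref{newton} with $j = 2$ and $p_1 = s_1$), which gives
\[
p_2 = s_1 p_1 - 2 s_2 = s_1^2 - 2 s_2.
\]
This single algebraic identity will let me convert freely between the three quantities appearing in the statement, and therefore between the two displayed inequalities.

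Next, I would prove the upper bound $s_1^2 \leq n p_2$ directly by Cauchy--Schwarz applied to the vectors $(c_1,\ldots,c_n)$ and $(1,\ldots,1)$ in $\mathbf{R}^n$:
\[
s_1^2 \;=\; \Bigl(\sum_{i=1}^n c_i\Bigr)^{\!2} \;\leq\; n\sum_{i=1}^n c_i^2 \;=\; n\,p_2,
\]
with equality if and only if $(c_1,\ldots,c_n)$ is parallel to $(1,\ldots,1)$, i.e., all the $c_i$ coincide. Substituting $p_2 = s_1^2 - 2 s_2$ into $s_1^2 \leq n p_2$ and solving algebraically yields $\tfrac{2n}{n-1}\,s_2 \leq s_1^2$, so the lower bound is logically equivalent to the upper bound (and inherits the same equality characterization). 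This is the only nontrivial step; the reality of the $c_i$ is used precisely here to justify Cauchy--Schwarz on real vectors.

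Finally, for the nonnegative case, I would observe that if $c_i \geq 0$ for every $i$, then $s_2 = \sum_{i<j} c_i c_j \geq 0$, so the identity $p_2 = s_1^2 - 2 s_2$ immediately gives $p_2 \leq s_1^2$, with equality precisely when $s_2 = 0$ (which, for nonnegative $c_i$, is equivalent to at most one $c_i$ being nonzero; I would then reconcile this with the equality condition as stated in the lemma).

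I do not expect a genuine obstacle here: the whole argument is a two-line deduction from Newton's identity plus Cauchy--Schwarz. The only subtle point is tracking the equality cases carefully, in particular verifying that the two sharpness statements on the two-sided inequality really are the same condition (``all $c_i$ equal''), which is automatic since both reduce to the Cauchy--Schwarz equality case.
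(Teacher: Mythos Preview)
Your approach is essentially identical to the paper's: both use Cauchy--Schwarz for $s_1^2 \le np_2$, the identity $s_1^2 = p_2 + 2s_2$ (which the paper obtains by directly expanding $(\sum c_i)^2$ rather than by citing Newton's identity), and then substitute to get the lower bound and the nonnegative case. Your observation that the equality condition in the nonnegative case should really be ``at most one $c_i$ nonzero'' (i.e., $s_2=0$) rather than ``all $c_i=0$'' is correct; the paper's stated equality condition is slightly imprecise on this point.
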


\begin{proof} By the Cauchy-Schwarz inequality:
$$ s_1^2=\left(\sum_{i=1}^n (c_i)(1)\right)^2\leq \left(\sum_{i=1}^n c_i^2\right) \left(\sum_{i=1}^n 1^2\right)=n \sum_{i=1}^n c_i^2=np_2 ,$$
with equality exactly when the sequence $\{c_i\}_{i=1}^n$ and $\{1\}_{i=1}^n$ are linearly dependent. In particular, when $c_i=c_j$ for all $i,j$. Also, 
$$s_1^2=\left(\sum_{i=1}^n c_i\right)^2=\sum_{i=1}^n c_i^2+2\sum_{i<j} c_ic_j=p_2+2s_2.$$
If all $c_i\geq 0$, notice that $s_1^2\geq p_2$ with equality precisely when all $c_i=0$. Rearranging the previous equation and combining it with the special case of the Cauchy-Schwarz inequality given above we get:
$$ s_1^2\leq n (s_1^2-2s_2) \quad\Rightarrow\quad 2ns_2\leq (n-1)s_1^2.$$
\end{proof} 

Being motivated by thinking of $p$ as a characteristic polynomial of a semisimple matrix, we define the \textit{translation length} of the polynomial $p$ to be

\begin{equation}\label{eq:poly_translation_length}
\displaystyle \ell(p):=\sqrt{\sum_{i=1}^n 2(\log\abs{a_i})^2}.
\end{equation}


\begin{thm}\label{thm:sarasbounds_polyversion}
Let $p(X)=\prod_{i=1}^n(X-a_i)\in \R[X]$ and assume $\prod_{i=1}^na_i=1$. Then:
\begin{equation}\label{eq:MahlerTrans}
2\sqrt{\frac{2}{n}}\log(M(p))
\leq\ell(p)
\leq 2 \log(M(p)).  \end{equation}
The lower bound is an equality exactly when there exists an $r\in\R$ such that for all $i$, either $|a_i|=r$ or $|a_i|^{-1}=r$. 
The upper bound is an equality when $\abs{a_i}=1$ for all but at most two of the $i$.
\end{thm}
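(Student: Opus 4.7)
The plan is to pass everything to the logarithms $b_i := \log|a_i|\in\R$. Since $\prod_i a_i = 1$ forces $\prod_i|a_i|=1$, we get $\sum_{i=1}^n b_i = 0$. Partition indices by sign: $I=\{i:b_i>0\}$ with $|I|=m$ and $J=\{i:b_i<0\}$ with $|J|=k$. Because $\log\max\{1,|a_i|\}=\max\{b_i,0\}$, we have $\log M(p)=P$ where $P:=\sum_{i\in I}b_i$, and the relation $\sum b_i=0$ gives $\sum_{i\in J}(-b_i)=P$ as well. Finally $\ell(p)^2/2 = \sum_i b_i^2 = \sum_{i\in I}b_i^2+\sum_{i\in J}b_i^2$, since zero-valued $b_i$ contribute nothing.

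For the upper bound I would apply the inequality $p_2\le s_1^2$ for nonnegative reals from Lemma \ref{s1s2ineq} — the cross terms in $(\sum x_i)^2$ are nonnegative — to the families $\{b_i\}_{i\in I}$ and $\{-b_i\}_{i\in J}$. This yields $\sum_I b_i^2\le P^2$ and $\sum_J b_i^2\le P^2$, so $\ell(p)^2\le 4P^2$, which is the desired $\ell(p)\le 2\log M(p)$. Equality in this chain forces each of the two inner sums to consist of at most one nonzero term, so $m,k\le 1$; combined with $\prod|a_i|=1$, at most two $b_i$ are nonzero, i.e.\ $|a_i|=1$ for at least $n-2$ of the $i$, exactly the stated condition.

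For the lower bound I would instead invoke the Cauchy-Schwarz bound $s_1^2\le n p_2$ of Lemma \ref{s1s2ineq}, separately on $I$ and $J$:
\begin{equation*}
P^2\le m\sum_{i\in I}b_i^2,\qquad P^2\le k\sum_{i\in J}b_i^2.
\end{equation*}
Dividing and adding gives $P^2(1/m+1/k)\le \sum_i b_i^2=\ell(p)^2/2$. Since $m,k$ are positive integers with $m+k\le n$, AM--GM on $mk$ yields $1/m+1/k=(m+k)/(mk)\ge 4/(m+k)$, and monotonicity then gives $4/(m+k)\ge 4/n$. This produces $\ell(p)^2\ge 8P^2/n$, i.e.\ $\ell(p)\ge 2\sqrt{2/n}\,\log M(p)$.

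The main care needed is the equality analysis for the lower bound: one must have simultaneous equality in the two Cauchy-Schwarz applications (forcing all $b_i$ on $I$ equal to a common value $\log r$ and all $b_i$ on $J$ equal to a common value $-\log s$), in the AM--GM step (forcing $m=k$), and in the monotonicity step (forcing $m+k=n$, i.e.\ no $b_i$ is zero). The vanishing-sum constraint $\sum b_i=0$ then forces $r=s$, so equality holds iff $|a_i|\in\{r,r^{-1}\}$ for every $i$, matching the theorem's condition. The degenerate case $\log M(p)=0$ (all $|a_i|=1$, $r=1$) yields equality trivially in both bounds. I do not foresee any serious obstacle beyond this careful bookkeeping of the three places in the lower-bound chain where equality can fail.
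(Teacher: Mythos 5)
Your proof is correct, and it splits notably from the paper's argument on the lower bound while being essentially equivalent on the upper bound. The paper works throughout with the single sequence $c_i:=|\log|a_i||$: for the lower bound it applies Cauchy--Schwarz (Lemma \ref{s1s2ineq}) once to all $n$ of the $c_i$, getting $2\log M(p)=s_1\le\sqrt{np_2}=\sqrt{n/2}\,\ell(p)$ in one step, with no need for an AM--GM argument or a case split on the number of positive versus negative logarithms. You instead partition into $I$ and $J$, apply Cauchy--Schwarz separately to each block, and then recombine with $1/m+1/k\ge 4/(m+k)\ge 4/n$. Your route produces the sharper intermediate bound $\ell(p)^2\ge 2P^2(1/m+1/k)$, which actually improves on $8P^2/n$ whenever some $|a_i|=1$, but at the cost of tracking three possible failures of equality (two Cauchy--Schwarz steps, AM--GM, and the monotonicity $m+k\le n$) and of handling the degenerate case $m=k=0$ separately, whereas the paper's single Cauchy--Schwarz application yields the equality condition $|a_i|\in\{|a_1|,|a_1|^{-1}\}$ directly. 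For the upper bound, your application of $p_2\le s_1^2$ separately to $\{b_i\}_{i\in I}$ and $\{-b_j\}_{j\in J}$ is algebraically the same as the paper's estimate $s_2\ge\bigl(\sum_{i\in I}c_i\bigr)\bigl(\sum_{j\in J}c_j\bigr)=(\log M(p))^2$ plugged into $p_2=s_1^2-2s_2$; both amount to discarding the nonnegative cross terms within $I$ and within $J$, and both yield the same equality analysis. Your equality bookkeeping (forcing $m=k$, $m+k=n$, all $b_i$ on $I$ equal, all on $J$ equal, and then $r=s$ from $\sum b_i=0$) is correct and matches the theorem's stated condition, including the degenerate case $r=1$.
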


\begin{rem}When $n=2$, the upper and lower bounds in \eqref{eq:MahlerTrans} are equal and so $\ell(x)=2\log(M(p))$, which coincides with \eqref{eq:original_mahler_length}. 
\end{rem}

\begin{proof}Let $p_{-1}(X):= \prod_{i=1}^n(X-a_i^{-1})$.  
Since $\prod_{i=1}^na_i=1$, $M(p)=M(p_{{-1}})>0$ and so $M(p)=\sqrt{M(p)M(p_{{-1}})}$. Then:
$$\log (M(p))
=\frac{1}{2}\log (M(p)M(p_{{-1}}))
=\frac{1}{2}\sum_{i=1}^n\left(\log_+\abs{a_i}+\log_+\abs{a_i^{-1}}\right)
=\frac{1}{2}\sum_{i=1}^n \abs{\log\abs{a_i}}  ,$$
where $\log_+(a)=\max\{0,\log(a)\}$. Applying Lemma \ref{s1s2ineq} to the sequence with $\{c_i\}_{i=1}^n=\{\abs{\log\abs{a_i}} \}_{i=1}^n$, we get: 
\begin{align}
2\log(M(p))=s_1
&\leq \sqrt{np_2}=\sqrt{n \sum_{i=1}^n \abs{\log\abs{a_i}}^2}  
= \sqrt{\frac{n}{2}}\ell(p), 
\end{align}
with equality precisely when, for all $i$, $\abs{\log\abs{a_1}}=\abs{\log\abs{a_i}}$ or, equivalently, when $\abs{a_i}\in\{\abs{a_1},\abs{a_1}^{-1}\}$.
 
We now find an upper bound for $\ell(p)$ by using specific information about the relative sizes of the $a_i$ that comes from our setup. Assume that $\abs{a_i}>1$ for some $i$, since otherwise $\abs{a_i}=1$ for all $i$ and so $\ell(p)=0=\log(M(p))$. Re-order the $\{a_i\}$ so that $\abs{a_1}\geq\ldots\geq \abs{a_k}\geq 1> \abs{a_{k+1}}\geq \ldots\geq \abs{a_n}>0$ and so $\prod_{i=1}^k \abs{a_i}=M(p)=M(p_{{-1}})=\prod_{i=k+1}^n \abs{a_i^{-1}}$. 
Using the same notation as in Lemma \ref{s1s2ineq}, we want to find a tight upper bound on $p_2$ in terms of $s_1$. We know that $p_2=s_1^2-2s_2\leq s_1^2$ since $c_i:=\abs{\log\abs{a_i}}\geq0$. To get a tighter bound, we find a lower bound on $s_2$ specific to our setup:
$$s_2=\sum_{i<j} c_ic_j
\geq \left(\sum_{i=1}^k c_i\right)\left(\sum_{j=k+1}^n c_j\right)
=\left(\log(M(p))\right)\left( \log(M(p_{{-1}}))\right)
=\left(\log(M(p))\right)^2.$$
Note that the above inequality is an equality when $c_i=0$ (so $\abs{a_i}=1$) for all $i\in[2,n-1]$.
Therefore:
\begin{align}
\ell(p)
&= \sqrt{2(s_1^2-2s_2)}
\leq \sqrt{2\left(2\log(M(p))\right)^2-4\left(\log(M(p))\right)^2}
=2 \log(M(p)).
\end{align}
\end{proof}

%
%

We conclude this section by observing that our results enable us to also bound translation length in terms of two other measures of a polynomial's complexity: length and discriminant.  
For $p(X)$ as above, its \textit{length} is:
\begin{equation}
\displaystyle L(p):=\sum_{i=0}^n \abs{s_{i}}
\end{equation} 
and its \textit{discriminant} is
\begin{equation}
\displaystyle \disc(p):=(-1)^{\frac{1}{2}n(n-1)}\prod_{i\neq j} (a_i-a_j)=\prod_{i<j} (a_i-a_j)^2.
\end{equation}

These quantities are related, and the following bounds are well known (see \cite{Mahler0})

\begin{align}\label{bddL&M}L(p)\le 2^nM(p)\le 2^nL(p)\end{align}

\begin{align}\label{discM}\abs{\disc(p)}\le n^nM(p)^{2n-2}\end{align}
 
 
 These inequalities can be combined with the inequality in Theorem \ref{thm:sarasbounds_polyversion} to bound $\ell(p)$ in terms of $L(p)$ and $\disc(p)$ as in the following corollary. Let $\log_+(a)=\max\{0,\log(a)\}$. 

 \begin{cor}\label{cor:sarabounds}
If $p(X)=\prod_{i=1}^n(X-a_i)\in \R[X]$ and $\prod_{i=1}^na_i=1$, then:
 \begin{align*}
  2\sqrt{\frac{2}{n}}\log_+(2^{-n}L(p))&\leq\ell(p)\leq 2 \log(L(p)),\\
   \frac{1}{n-1}\sqrt{\frac{2}{n}}\log_+\abs{n^{-n}\disc(p)}&\leq\ell(p).
 \end{align*}
 \end{cor}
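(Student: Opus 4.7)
The plan is to chain Theorem \ref{thm:sarasbounds_polyversion} with the classical Mahler-measure comparisons \eqref{bddL&M} and \eqref{discM}, so that all three inequalities reduce to taking logarithms and rearranging. Throughout I will use the trivial fact that $M(p)=\prod_i \max\{1,|a_i|\}\ge 1$, so $\log(M(p))\ge 0$; this lets me replace $\log$ by $\log_+$ freely on the Mahler side and handle the edge cases where the quantity inside $\log_+$ on the $L(p)$ or $\disc(p)$ side is less than one.

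For the upper bound $\ell(p)\le 2\log(L(p))$, the right half of \eqref{bddL&M} gives $M(p)\le L(p)$, so monotonicity of $\log$ combined with the upper bound $\ell(p)\le 2\log(M(p))$ of Theorem \ref{thm:sarasbounds_polyversion} finishes it. For the matching lower bound $2\sqrt{2/n}\,\log_+(2^{-n}L(p))\le \ell(p)$, the left half of \eqref{bddL&M} reads $2^{-n}L(p)\le M(p)$. If $2^{-n}L(p)\le 1$ the claim is vacuous since $\ell(p)\ge 0$; otherwise $\log_+(2^{-n}L(p))=\log(2^{-n}L(p))\le \log(M(p))$, and plugging into the lower bound of Theorem \ref{thm:sarasbounds_polyversion} yields the claim.

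For the discriminant inequality, \eqref{discM} rearranges to $n^{-n}|\disc(p)|\le M(p)^{2(n-1)}$. Again the case $n^{-n}|\disc(p)|\le 1$ is vacuous. Otherwise, taking logs and dividing by $2(n-1)$ produces $\frac{1}{2(n-1)}\log_+|n^{-n}\disc(p)|\le \log(M(p))$, and multiplying through by the factor $2\sqrt{2/n}$ from the lower bound of Theorem \ref{thm:sarasbounds_polyversion} yields exactly $\frac{1}{n-1}\sqrt{2/n}\,\log_+|n^{-n}\disc(p)|\le \ell(p)$.

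There is no real obstacle here: the corollary is a bookkeeping translation of Theorem \ref{thm:sarasbounds_polyversion} from Mahler measure to length and discriminant via standard polynomial-coefficient comparisons. The only point needing care is the $\log_+$, for which the relevant bound is trivially $0\le \ell(p)$ whenever the argument is at most $1$; otherwise $\log_+$ and $\log$ agree and the chain of inequalities goes through directly.
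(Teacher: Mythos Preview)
Your argument is correct and follows the same route as the paper: chain the Mahler--length bounds of Theorem~\ref{thm:sarasbounds_polyversion} with the comparisons \eqref{bddL&M} and \eqref{discM}, taking logarithms and rearranging. If anything, you are slightly more explicit than the paper in handling the $\log_+$ edge cases.
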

 
\begin{proof}
 The length inequality follows directly from replacing $M$ in Theorem \ref{thm:sarasbounds_polyversion} with $L$ using \eqref{bddL&M}.
 To get the discriminant inequality, first re-arrange \eqref{discM} and take the logarithm to get: 
 $$\log_+\abs{n^{-n}\disc(p)}\leq 2(n-1)\log(M(p)).$$
Combine that with the lower bound in Theorem \ref{thm:sarasbounds_polyversion} that $2\log(M(p))\leq \sqrt{\frac{n}{2}} \ell(p)$ to get: 
$$\log_+\abs{n^{-n}\disc(p)}\leq (n-1)\sqrt{\frac{n}{2}} \ell(p).$$
 \end{proof}

\section{Characteristic Polynomials and the Proof of Theorem  \ref{thm:universalbound}}\label{section:universal_bounds}

If $\mathfrak{X}$ is a globally symmetric space of noncompact type, then 
the identity component of its isometry group $\mathcal{G}:=\isom_0(\mathfrak{X})$ is a connected, adjoint semisimple Lie group and 
the stabilizer subgroup $\mathcal{K}=\stab_{\mathcal{G}}(o)$ of a point $o\in \mathfrak{X}$ is a maximal compact subgroup \cite[IV.3.3, VI.1.1, VI.2.2]{H}.
It follows that $\mathcal{K}\backslash\mathcal{G}\cong \mathfrak{X}$.
We say $\mathfrak{X}$ is \textit{simple} if $\mathcal{G}$ is simple (i.e., the complexification of its Lie algebra is simple). 
By a \textit{simple locally symmetric space}, we mean a space of the form $\mathfrak{X} / \Gamma$ where $\mathfrak{X}$ is simple and $\Gamma \subset \mathcal{G}$ is a lattice.

We now discuss the construction of arithmetic lattices in $\mathcal{G}$.  
Here we assume some familiarity with algebraic groups and arithmetic groups.
%
%
For detailed references on algebraic and arithmetic groups, we refer the reader to \cite{Borel, Witte}.
For a discussion of arithmetic simple orbifolds, see \cite[Section 7]{LLM}.

To begin, fix:
\begin{enumerate}[label={(\arabic*)}]
\item a degree $d_1$, totally real number field $k$ with ring of integers $\mathcal{O}_k$,
\item an embedding $k\subset \R$,
\item a simple, semisimple, adjoint, algebraic $k$-group $\mathrm{G}$ of dimension $d_2$ such that the identity component of its real points $(\mathrm{G}(\R))_0\cong \mathcal{G}$ as Lie groups,
\item a choice of a $k$-rational basis for its Lie algebra $\mathfrak{g}$ and use it to identify $\GL(\frak{g})$ with $\GL_{d_2}(k)$,
\item $\mathrm{G}(\mathcal{O}_k):=\Ad^{-1}(\Ad(\mathrm{G}(k)) \cap \GL_{d_2}(\mathcal{O}_k))$.
\end{enumerate}

Assume $\mathrm{G}$ is $\R$-isotropic but $\mathrm{G}$ is anisotropic for all non-identity embeddings $\sigma:k\to \R$.  
Then any $\Gamma\subset \mathrm{G}(\R)$ commensurable to $\mathrm{G}(\mathcal{O}_k)$ is an \textit{arithmetic lattice} in $\mathrm{G}(\R)$.  

Fix a basis for $k$ over $\Q$ and let $\Reg:k\to\Mat_{d_1}(\Q)$ be the regular representation associated to this basis.
Let $L/k$ be the Galois closure of $k$ with fixed embedding $L\subset \C$.
We have two injective ring maps $\iota_1,\iota_2: \Mat_{d_2}(k) \to \Mat_{d_1d_2}(L)$ defined below.
We obtain our first ring map by applying the regular representation to each entry of the matrix.
$$\iota_1: \Mat_{d_2}(k)\to \Mat_{d_1d_2}(\Q)\subset  \Mat_{d_1d_2}(L)$$

$$
\begin{pmatrix}
y_{11} & \cdots & y_{1d_2}\\
	& \ddots & \\
y_{d_21} & \cdots & y_{d_2d_2}
\end{pmatrix}
\mapsto
\left(\begin{matrix}[c|c|c]
\Reg(y_{11}) & \cdots & \Reg(y_{1d_2})\\ \hline 
	& \ddots & \\ \hline 
\Reg(y_{d_21}) & \cdots & \Reg(y_{d_2d_2})
\end{matrix}\right).$$

Meanwhile, we obtain our second injective ring map by looking at the diagonal map across all real embeddings of $k$, which we denote $V_\infty$.  
 
$$\iota_2: \Mat_{d_2}(k)\to \prod_{\sigma\in V_{\infty}} \Mat_{d_2}(L)\subset \Mat_{d_1d_2}(L)$$

$$
\begin{pmatrix}
y_{11} & \cdots & y_{1d_2}\\
	& \ddots & \\
y_{d_21} & \cdots & y_{d_2d_2}
\end{pmatrix}
\mapsto
\left(\begin{array}{@{}c@{}cc@{}}
\hspace{0.5pc}  
\begin{array}{|ccc|}\hline
\sigma_1(y_{11}) & \cdots & \sigma_1(y_{1d_2})\\
	& \ddots & \\
\sigma_1(y_{d_21}) & \cdots & \sigma_1(y_{d_2d_2})\\\hline
  \end{array} \hspace{0.5pc}
  & 0& 0 \\
  0 & \ddots & 0\\
  0 &  0 &
\hspace{0.5pc}  
\begin{array}{|ccc|}\hline
\sigma_{d_1}(y_{11}) & \cdots & \sigma_{d_1}(y_{1d_2})\\
	& \ddots & \\
\sigma_{d_1}(y_{d_21}) & \cdots & \sigma_{d_1}(y_{d_2d_2})\\\hline
  \end{array} \hspace{0.5pc}
\end{array}\right).
 $$
 
 \begin{lem}\label{restriction_conjugate}
The images $\iota_1(\Mat_{d_2}(k))$ and $\iota_2(\Mat_{d_2}(k))$ are $\GL_{d_1d_2}(L)$-conjugate.
\end{lem}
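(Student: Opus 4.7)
The plan is to interpret $\iota_1(Y)$ and $\iota_2(Y)$ as two matrix representations of the same $L$-linear endomorphism of a single $L$-vector space, taken with respect to two different bases. Conjugacy in $\GL_{d_1d_2}(L)$ will then follow from the change-of-basis formula, and the essential point is that the conjugating matrix $Q$ can be chosen independently of $Y$.

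First I would set the scene on $V := k^{d_2}$, viewed as a $\Q$-vector space of dimension $d_1d_2$. Left multiplication by $Y \in \Mat_{d_2}(k)$ is a $\Q$-linear endomorphism of $V$, and in the ordered $\Q$-basis $\{e_p f_q\}_{p,q}$ (where $\{e_p\}$ is the fixed $\Q$-basis of $k$ used to define $\Reg$ and $\{f_q\}$ is the standard $k$-basis of $k^{d_2}$, ordered so that $q$ varies slowest), the matrix of $Y$ is exactly $\iota_1(Y)$.

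Next I would extend scalars to $L$, setting $V_L := V \otimes_\Q L$. The key algebraic ingredient is the $L$-algebra isomorphism
\[
k \otimes_\Q L \;\xrightarrow{\sim}\; \prod_{\sigma \in V_\infty} L, \qquad a \otimes x \longmapsto (\sigma(a)x)_{\sigma \in V_\infty},
\]
which is valid because $L$ is the Galois closure of $k$ and $k$ is totally real, giving $|V_\infty| = d_1$. Tensoring with $k^{d_2}$ yields $V_L \cong \prod_\sigma L^{d_2}$, and on the $\sigma$-factor, $Y$ acts by the matrix $\sigma(Y) = (\sigma(y_{ij}))$. In the induced $L$-basis of $V_L$, ordered to match the block-diagonal layout in the definition of $\iota_2$, the matrix of $Y$ is precisely $\iota_2(Y)$. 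Hence if $Q \in \GL_{d_1d_2}(L)$ denotes the change-of-basis matrix between these two $L$-bases of $V_L$, then $Q^{-1} \iota_1(Y) Q = \iota_2(Y)$ for every $Y \in \Mat_{d_2}(k)$, which yields the conjugacy of the images.

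The main obstacle I anticipate is purely bookkeeping: matching the explicit block orderings used in the displayed definitions of $\iota_1$ and $\iota_2$. Concretely, $Q$ will be a composition of a Kronecker-type product $I_{d_2} \otimes P$, where $P \in \GL_{d_1}(L)$ is the matrix simultaneously conjugating $\Reg(a)$ to $\diag(\sigma_1(a), \ldots, \sigma_{d_1}(a))$ for all $a \in k$, together with a permutation matrix that rearranges the $(p,q)$-indices into $(\sigma,q)$-blocks. Since both factors lie in $\GL_{d_1d_2}(L)$, any discrepancy in ordering is harmless, but care is needed to verify that the same $Q$ works for every $Y$ and that the resulting block pattern really matches the $\iota_2$ layout rather than a permuted variant.
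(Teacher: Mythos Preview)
Your argument is correct and actually proves the slightly stronger statement that a single $Q\in\GL_{d_1d_2}(L)$ satisfies $Q^{-1}\iota_1(Y)Q=\iota_2(Y)$ for every $Y$, which is exactly what the paper needs downstream. The route, however, is different from the paper's. The paper tensors up to obtain a central simple $L$-algebra and then invokes the Skolem--Noether theorem to conclude that the two extended homomorphisms into $\Mat_{d_1d_2}(L)$ differ by an inner automorphism; this is a one-line appeal to a structural theorem, at the cost of importing that machinery. Your approach is more elementary and constructive: it realizes both $\iota_1(Y)$ and $\iota_2(Y)$ as matrices of the single $L$-linear map ``multiply by $Y$'' on $k^{d_2}\otimes_\Q L$ with respect to two explicit bases, using only the decomposition $k\otimes_\Q L\cong\prod_\sigma L$. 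The payoff is that you can write $Q$ down explicitly (as you note, $I_{d_2}\otimes P$ times a permutation), whereas Skolem--Noether gives existence without a formula. Your anticipated bookkeeping with the block orderings is real but routine, and your description of how to handle it is accurate.
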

\begin{proof}
If $A:=L\otimes_k \Mat_{d_2}(k)$, then $A$ is a central simple $L$-algebra \cite[7.8]{Reiner}.  
Letting $B:= \Mat_{d_1d_2}(L)$, and letting $\iota_1$ and $\iota_2$ extend linearly to $A$, we have $L\subset \iota_1(A)\subset B$ and $L\subset \iota_2(A)\subset B$.
It then follows from the Skolem--Noether Theorem \cite[7.21]{Reiner} that the images $\iota_1(A)$ and $\iota_2(A)$
are identified by an inner automorphism of $A$.
More precisely, there exists a $g \in \GL_{d_1d_2}(L)$ such that
if $h \in \Mat_{d_2}(k)$, then $\iota_2(1\otimes h) = g\iota_1(1\otimes h)g^{-1}$.  The result follows.
%
\end{proof}

If $\mathrm{H}$ is an algebraic $k$-group, the restriction of scalars group $\Res_{k/\Q}(\mathrm{H})$ is a $\Q$-group such that $\mathrm{H}(k)\cong \Res_{k/\Q}(\mathrm{H})(\Q)$ \cite[2.1.2]{PlatRap}.
Fix the $\Q$-rational map $\varphi: \Res_{k/\Q}(\SL_{d_2})\to \SL_{d_1d_2}$ which at the level of $k$-points is $\iota_1$, i.e.
if $y \in \SL_{d_2}(k)\cong  \Res_{k/\Q}(\SL_{d_2})(\Q)$, then $\varphi(y)=\iota_1(y)$.

As in \cite[Proposition 7.2]{LLM}, composing the adjoint map with restriction of scalars, we obtain an embedding of Lie groups $\rho:\mathrm{G}(\R)\to \SL_{d_1d_2}(\R)$.
This embedding has many nice features and in Proposition \ref{prop:rep_char_poly}  we now establish an important one concerning characteristic polynomials: relative to this embedding, semisimple elements of arithmetic lattices always have characteristic polynomials with integer coefficients.

\begin{prop}\label{prop:rep_char_poly}
If $\Gamma\subset \mathrm{G}(\R)$ is an arithmetic lattice and $x\in \Gamma$ is semisimple, then $p_{\Ad(x)}(X)\in\mathcal{O}_k[X]$ and $p_{\rho(x)}(X)\in \Z[X]$.
\end{prop}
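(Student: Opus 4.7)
The plan is to prove the two assertions separately, obtaining the second as a consequence of the first by way of Lemma \ref{restriction_conjugate}.

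For the first statement, I would exploit commensurability together with semisimplicity. Since $\Gamma$ is commensurable with $\mathrm{G}(\mathcal{O}_k)$, the subgroup $\Lambda := \Gamma \cap \mathrm{G}(\mathcal{O}_k)$ has finite index in $\Gamma$, so some positive power $x^m$ lies in $\Lambda$. Then $\Ad(x)^m = \Ad(x^m) \in \GL_{d_2}(\mathcal{O}_k)$, and so its eigenvalues are algebraic integers. Because $x$ is semisimple, $\Ad(x)$ is diagonalizable over $\overline{\Q}$, and its eigenvalues are $m$-th roots of the eigenvalues of $\Ad(x)^m$; these $m$-th roots are also algebraic integers. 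Finally, $\Ad$ is a $k$-morphism, so $\Ad(x) \in \GL_{d_2}(k)$, and the coefficients of $p_{\Ad(x)}(X)$ are simultaneously elements of $k$ and algebraic integers. Hence they lie in $\mathcal{O}_k$.

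For the second statement, Lemma \ref{restriction_conjugate} tells us that $\rho(x) = \iota_1(\Ad(x))$ is $\GL_{d_1 d_2}(L)$-conjugate to $\iota_2(\Ad(x))$, so the two matrices share a characteristic polynomial. Since $\iota_2(\Ad(x))$ is block diagonal with blocks $\sigma(\Ad(x))$ indexed by the real embeddings $\sigma \in V_\infty$, one obtains the factorization
$$p_{\rho(x)}(X) \;=\; p_{\iota_2(\Ad(x))}(X) \;=\; \prod_{\sigma \in V_\infty} \sigma\bigl(p_{\Ad(x)}\bigr)(X).$$
By the first part each factor has coefficients in $\mathcal{O}_L$, and the full product is invariant under the action of $\Gal(L/\Q)$, which merely permutes the factors (because $V_\infty$ corresponds to the coset space $\Gal(L/\Q)/\Gal(L/k)$). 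A $\Gal(L/\Q)$-invariant algebraic integer in $L$ is a rational integer, so the coefficients of $p_{\rho(x)}(X)$ lie in $\Z$.

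The main obstacle is the step from integrality of $\Ad(x)^m$ to integrality of the eigenvalues of $\Ad(x)$ itself, which is exactly where semisimplicity of $x$ is used in an essential way; without it one would be stuck with integrality modulo the nilpotent part. The remainder of the argument is really a Galois-descent repackaging of Lemma \ref{restriction_conjugate}, mirroring the standard fact that the restriction of scalars of a matrix with coefficients in $\mathcal{O}_k$ yields a matrix with characteristic polynomial in $\Z[X]$.
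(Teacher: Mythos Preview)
Your overall strategy matches the paper's, and your second paragraph is fine (the paper argues slightly differently---it observes directly that $\iota_1$ lands in $\Mat_{d_1d_2}(\Q)$, so $p_{\rho(x)}$ has rational coefficients, and then uses $\iota_2$ only to see that those coefficients are algebraic integers---but your Galois-descent version is equally valid since $k$ is totally real).

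There is, however, a real gap in your first paragraph. You write ``$\Ad$ is a $k$-morphism, so $\Ad(x)\in\GL_{d_2}(k)$,'' but this inference requires $x\in\mathrm{G}(k)$, which you have not established. By the paper's definition an arithmetic lattice is any $\Gamma\subset\mathrm{G}(\R)$ commensurable with $\mathrm{G}(\mathcal{O}_k)$; nothing forces $\Gamma\subset\mathrm{G}(k)$. Knowing that $\Ad(x)^m\in\GL_{d_2}(\mathcal{O}_k)$ for some $m$ does not by itself put $\Ad(x)$ in $\GL_{d_2}(k)$: for instance $\mathrm{diag}(\sqrt{2},\,1/\sqrt{2})$ squares into $\GL_2(\Q)$ without lying there. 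The paper closes this gap by invoking the hypothesis that $\mathrm{G}$ is \emph{adjoint} and citing \cite[Proposition~1.2]{BP}, which guarantees that elements of such a $\Gamma$ in fact lie in $\mathrm{G}(k)$. You should insert that citation (or an equivalent argument using the adjoint hypothesis) before concluding that the coefficients of $p_{\Ad(x)}$ lie in $k$.
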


\begin{proof}
To begin, we show that if $x\in \Gamma$ is semisimple, then the characteristic polynomial $p_{\Ad(x)}(X)$ of the matrix $\Ad(x)\in \SL_{d_2}(k)$ lies in $\mathcal{O}_k[X]$.
By assumption, $\Gamma$ is commensurable with $\mathrm{G}(\mathcal{O}_k)$, so let $e=[\Gamma:(\Gamma\cap \mathrm{G}(\mathcal{O}_k))]$.  
Then $x^e\in \mathrm{G}(\mathcal{O}_k)$, and thus $p_{\Ad(x^e)}(X)\in \mathcal{O}_k[X]$.
Hence $\Ad(x^e)$ has integral eigenvalues, and thus the eigenvalues of $\Ad(x)$ are algebraic integers.  
The coefficients of the characteristic polynomial are symmetric polynomials on the eigenvalues \eqref{charpoly}, thus the coefficients of $p_{\Ad(x)}(X)$ are algebraic integers.
Since $\mathrm{G}$ is adjoint, $x \in \mathrm{G}(k)$  \cite[Proposition 1.2]{BP}, so $\Ad(x)\in \SL_{d_2}(k)$ hence $p_{\Ad(x)}(X)\in k[X]$, and we conclude $p_{\Ad(x)}(X)\in \mathcal{O}_k[X]$.

Next we show that if $y\in \SL_{d_2}(k)$ is semisimple such that $p_y(X)\in\mathcal{O}_k[X]$, then $p_{\iota_1(y)}(X)\in \Z[X]$.
To begin, since $y\in \SL_{d_2}(k)$, it follows that $\iota_1(y)\in \SL_{d_1d_2}(\Q)$ and hence $p_{\iota_1(y)}(X)$ has rational coefficients.
Meanwhile $\iota_2$, restricted to $\SL_{d_2}(k)$, gives the diagonal embedding
$$\SL_{d_2}(k)\to \prod_{\sigma\in V_{\infty}} \SL_{d_2}(\sigma (k))\subset \SL_{d_1d_2}(L)$$ 
where $V_{\infty}$ is the set of all real embeddings of $k$ and $L$ is the Galois closure of $k$.
Since $\iota_1(y)$ and $\iota_2(y)$ are $\GL_{d_1d_2}(L)$-conjugate by Lemma \ref{restriction_conjugate}, they have equal characteristic polynomials.
If $p_y(X) = \sum_{i=0}^nc_iX^i$ for $c_i\in \mathcal{O}_k$, then it follows that $$p_{\iota_2(y)}(X) = \prod_{\sigma\in V_{\infty}}\left(\sum_{i=0}^n\sigma(c_i)X^i\right).$$
Since each $\sigma(c_i)$ is an algebraic integer, all of the coefficients of $p_{\iota_2(y)}(X)$ are algebraic integers. 
We deduce that the coefficients of $p_{\iota_1(y)}(X)$ are both rational and algebraic integers, thus concluding $p_{\iota_1(y)}(X)\in \Z[X]$.
\end{proof}

While there are many reasonable Riemannian metrics to put on $\mathfrak{X}_n:=\SO(n)\backslash \SL_n(\R)$, as in \cite[Equation (10)]{LLM}, we fix once and for all the Riemannian metric on 
$\mathfrak{X}_n$ given by
\begin{equation}\label{metric_eq}
\langle X,Y\rangle:= 2\tr(XY),\hspace{2pc} X,Y\in\mathfrak{sl}_n(\R).
\end{equation}
This has the nice geometrical property that the inclusions
of the hyperbolic plane $\mathbb{H}^2\to \mathfrak{X}_n$
(associated to any $2\times 2$ block along the diagonal $\SL_2(\R)\to \SL_n(\R)$) are totally geodesic, isometrical immersions (i.e., the images have constant curvature -1).

For any inclusion of a connected semisimple lie group $H\to \SL_n(\R)$, this induces a \textit{subspace metric} (which we will denote $g$) on the corresponding symmetric space $(H\cap \SO(n)) \backslash H \subset \mathfrak{X}_n$.

\begin{proof}[Proof of Theorem \ref{thm:universalbound}]
Let $\Gamma\subset \mathrm{G}(\R)$ be an arithmetic subgroup
%
%
%
and $N:=\mathrm{K}\backslash \mathrm{G}(\R)/\Gamma$, where $K$ is a maximal compact subgroup of $\mathrm{G}(\R)$.
By \cite[Theorem 7.2]{LLM}, we have an embedding of Lie groups
$\rho:\mathrm{G}(\R)\to \SL_{n}(\R)$ where $n=|k:\Q| \dim \mathrm{G}$.
It follows from Proposition \ref{prop:rep_char_poly} that if  $x\in\Gamma$ is semisimple, then $p_{\rho(x)}(X)\in \Z[X]$, and hence 
by our Mahler-Length Theorem \ref{thm:sarasbounds} and Proposition \ref{prop_mahler_monotic_lower_bounds}, we have 
\begin{equation}
\sys(N,g)\ge \frac{2\sqrt{2}}{\sqrt{n}}f(n),
\end{equation}
where $\sys(N,g)$ denotes the systole of $N$ relative to the subspace metric \eqref{metric_eq}, and $f(n)$ is as in \eqref{eq:mahler_bounds}.
\end{proof}

\section{Large Systoles and the Proof of Theorem \ref{thm:magnitude}}\label{section:large_systole}

Let $N$ be a compact, simple arithmetic manifold. In \cite[Theorem B]{LLM} we showed that $N$ has a finite degree cover $N'$ whose systole satisfies $\sys(N')\ge c_1\log(\Vol(N'))+c_2$ for certain positive constants $c_1$ and $c_2$. In order to prove Theorem 
\ref{thm:magnitude} it therefore suffices to derive an upper bound for the systole of $N'$ whose order of magnitude is also $\log(\Vol(N'))$. This follows from the following proposition.


\begin{prop}\label{systoleupperbound}
Let $\mathfrak X$ be a symmetric space of noncompact type and $\Gamma$ be a discrete, cocompact subgroup of isometries of $\mathfrak X$ having finite covolume. Then $\sys(\mathfrak X/\Gamma)\leq c \log(\Vol(\mathfrak X/\Gamma))$ for some positive constant $c$ depending only on $\mathfrak X$.
\end{prop}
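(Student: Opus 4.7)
The strategy is a classical packing argument: the systole controls the radius of a metric ball that embeds in the quotient, while balls in the universal cover grow exponentially in volume. Set $\ell := \sys(\mathfrak{X}/\Gamma)$.

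First, I would show that a ball of radius $\ell/2$ in $\mathfrak{X}$ embeds isometrically into the quotient. Let $\sigma$ be a shortest non-contractible closed geodesic in $\mathfrak{X}/\Gamma$, choose a regular point $p\in\sigma$, and let $\tilde p\in\mathfrak{X}$ be a lift. If some nontrivial $\gamma\in\Gamma$ moved $\tilde p$ by less than $\ell$, projecting the geodesic segment $[\tilde p,\gamma\tilde p]$ would produce a geodesic loop at $p$ shorter than $\ell$; in nonpositive curvature, each nontrivial free homotopy class contains a length-minimizing closed geodesic, yielding a closed geodesic of length less than $\ell$ and contradicting minimality. (Torsion in $\Gamma$ is handled by choosing $\tilde p$ off the fixed sets of torsion elements, which form a subset of positive codimension.) Since $\mathfrak{X}$ has no conjugate points, $\exp_{\tilde p}$ is a diffeomorphism on $B(\tilde p,\ell/2)$, and we conclude
$$\Vol_{\mathfrak{X}}(B(\ell/2))\leq\Vol(\mathfrak{X}/\Gamma).$$

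Second, I would prove an exponential lower bound $\Vol_{\mathfrak{X}}(B(r))\geq Ce^{hr}$ with constants $h,C>0$ depending only on $\mathfrak{X}$. Writing $\mathfrak{X}=G/K$ with $G$ semisimple of noncompact type, in polar coordinates the volume form on the positive Weyl chamber $\mathfrak{a}^+$ is proportional to $\prod_{\alpha\in\Sigma^+}\sinh^{m_\alpha}(\alpha(H))$, where $\Sigma^+$ is the set of positive restricted roots with multiplicities $m_\alpha$. Since $\mathfrak{X}$ is of noncompact type, the linear functional $2\rho:=\sum_\alpha m_\alpha\alpha$ is nonzero, so integrating along any fixed direction in $\mathfrak{a}^+$ yields the claimed exponential growth (with $h$ proportional to $\|\rho\|$, the volume entropy of $\mathfrak{X}$).

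Combining the two steps gives $Ce^{h\ell/2}\leq\Vol(\mathfrak{X}/\Gamma)$ once $\ell$ exceeds the threshold where the exponential bound takes effect; rearranging produces $\ell\leq(2/h)\log(\Vol(\mathfrak{X}/\Gamma))+O(1)$. Invoking Kazhdan--Margulis to bound covolumes of lattices in $\mathfrak{X}$ uniformly from below, the additive constant can be absorbed into the multiplicative one, and the proposition follows. The main subtlety is step two, where all constants must be kept independent of $\Gamma$; this is standard from the explicit Lie-theoretic volume formula, but does require a small amount of bookkeeping to extract uniform $h$ and $C$.
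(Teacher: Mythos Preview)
Your argument is correct and is essentially the same packing-plus-exponential-growth argument as the paper's: a ball of radius roughly $\sys/2$ injects into the quotient, while balls in $\mathfrak{X}$ have volume growing exponentially in the radius. The only differences are cosmetic---the paper runs the contrapositive (choosing $R$ so that $B(R)$ cannot inject), cites Knieper's asymptotic $\Vol(B(R))\sim R^{(r-1)/2}e^{hR}$ rather than deriving exponential growth from the root-space volume form, and leaves implicit the Kazhdan--Margulis lower bound on covolume that you invoke to absorb the additive constant.
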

\begin{proof}
Let $B$ be a closed ball of radius $R$ in $\mathfrak X$ which has volume greater than $\Vol(\mathfrak X/\Gamma)$. The projection $\pi: B\rightarrow \mathfrak X/\Gamma$ cannot be one-to-one, hence there exist distinct points $x_1,x_2\in B$ such that $\pi(x_1)=\pi(x_2)$. Then $x_2=\gamma x_1$ for some semisimple element $\gamma\in\Gamma$, and consequently the translation length $\ell(\gamma)$ of $\gamma$ satisfies $\ell(\gamma)\leq 2R$. The proposition now follows from \cite[Theorem A]{knieper}, where it is shown that the volume of $B$ is asymptotically equivalent to $R^{\frac{r-1}{2}}e^{h R}$ as $R$ tends to infinity, where $r$ is the rank of $\mathfrak X$ and $h>0$ denotes the volume entropy of $\mathfrak X$. Indeed, the result of \cite[Theorem A]{knieper} shows that the volume of $B$ is greater than $c_1e^{hR}$ for all $R\geq R_0$, where $c_1$ and $R_0$ are positive constants. In particular the volume of $B$ is will be greater than $\Vol(\mathfrak X/\Gamma)$ for any $R \geq R_0$ satisfying $R> \frac{1}{h}\log(\Vol(\mathfrak X/\Gamma)/c_1)$, and there clearly exists a positive constant $c$ for which such an $R$ may be taken to satisfy $R<c\log(\Vol(\mathfrak X/\Gamma))$.\end{proof}

%
%
%
%






\section{Geometry of Special Linear Orbifolds and Theorem \ref{thm:systolebounds}}\label{section:speciallinearbackground}
%

For $n\ge 2$, we fix the globally symmetric space $\mathfrak{X}_n:=\SO(n) \backslash \SL_n(\R)$.
If $\Gamma \subset \SL_n(\R)$ is a lattice, then $\mathfrak{X}_n/\Gamma$ is a \textit{special linear orbifold}.
In this section, we give a brief overview of the construction and geometry of what we call \textit{standard} special linear orbifolds.
For a detailed construction, we refer the reader to 
\cite[Section 4]{LLM}.
This construction requires knowledge of the theory of central simple algebras, for which we refer the reader to \cite{Reiner}.

Let $A$ be a central simple algebra over $\Q$ of dimension $n^2\ge 4$.
The Wedderburn Structure Theorem states that $A\cong \mathrm{M}_m(D)$ for some division algebra $D$ over $\Q$ where $n^2=m^2(\dim D)^2$.

Let $\ell$ denote either $\R$ or the field of $p$-adic numbers for some rational prime $p$.
Then $A\otimes_\Q \ell$ is a central simple algebra over $\ell$, and again by the Wedderburn Structure Theorem, there exists a division algebra $D_\ell$ over $\ell$ such that there exists an isomorphism
$h:A\otimes_\Q \ell \cong \mathrm{M}_{m'}(D_\ell)$.
If $\dim D_{\ell}>1$, we say $A$ ramifies over $\ell$, otherwise we say $A$ is unramified over $\ell$.

A $\Z$-\textit{order} in $A$ is a subring $\mathcal{O}$ which is also a finitely generated $\Z$-module for which $A=\Q\otimes_\Z \mathcal{O}$.
An order is \textit{maximal} if it is maximal with respect to inclusion.  Maximal orders always exist \cite[10.4]{Reiner}.
Let $\mathcal{O}$ be a maximal order.

To produce our standard special linear manifolds, we fix a central simple algebra $A$ over $\Q$ that is unramified over $\R$.
Fix the isomorphism $h: A\otimes_\Q \R \to \mathrm{M}_n(\R)$.
Via that isomorphism, we can define the \textit{norm}
$N(a):=\det(h(a\otimes 1))$.
(It is a fundamental result that this definition is independent of the choice of isomorphism $h$).

Let $\mathcal{O}^1$ denote the set of norm one elements of $\mathcal{O}$.
Following the convention of \cite[8.3.4]{MaR}, if $\Gamma \subset h(\mathcal{O}^1)$, then we say $\Gamma$ is a 
\textit{standard arithmetic lattice in $\SL_n(\R)$ derived from a central simple algebra} and similarly
$M:= \mathfrak{X}_n / \Gamma$ is a \textit{standard special linear orbifold derived from a central simple algebra}.
Any orbifold commensurable to such an $M$ is a \textit{standard linear orbifold}.


As a locally symmetric space, $M$ has a one parameter family of Haar measures induced from $\mathfrak{X}_n$.
We choose the normalization \eqref{metric_eq} which we called the \textit{geometric metric} \cite[Section 2]{LLM}, in which the natural embedding $\SL_2(\R)\to \SL_n(\R)$ as a $2\times 2$ block along the diagonal results in a totally geodesic, isometric embedding $\mathbb{H}^2 \to \mathfrak{X}_n$.

%
%

\begin{proof}[Proof of Theorem \ref{thm:systolebounds}]
Let $M$ be a special linear orbifold derived from a central simple algebra., i.e. $M \cong \mathfrak{X}_n / \Gamma$ where $\Gamma \subset \mathcal{O}^{1}$, the norm 1 elements of a maximal order $\mathcal{O}$.
Let $x\in \Gamma$ be semisimple and let $p_x(X)$ denote the characteristic polynomial of $h(x\otimes 1)$ for $h$ as above.
It is a fundamental result \cite[Theorem 8.6]{Reiner} that 
$p_x(X)$ is a monic, degree $n$ polynomial in $\Z[X]$.
(Note that $p_x$ need not be irreducible.)
By our Mahler-Length Theorem \ref{thm:sarasbounds} and Proposition \ref{prop_mahler_monotic_lower_bounds}, we have then
$\sys(M)\ge \frac{2\sqrt{2}}{\sqrt{n}}f(n),$
where $f(n)$ is as in \eqref{eq:mahler_bounds}, and the result follows.
\end{proof}

\section{Proof of Theorem \ref{thm:effectivegw}}

Let $p>2$ be a rational prime, $k$ be a totally real number field with ring of integers $\mathcal O_k$ and  $\mathfrak{p}_1,\ldots \mathfrak{p}_s$ be primes of $k$. 

Let $B$ be the division algebra over $k$ with Hasse invariant $\frac{1}{p}$ at each of the primes $\mathfrak p_1,\dots, \mathfrak p_{s-1}$ and Hasse invariant $\frac{a}{p}$ at the prime $\mathfrak p_s$ where $a$ is a positive integer satisfying $a + (s-1) \equiv 0 \pmod{p}$. Since the sum of these Hasse invariants is an integer, it follows from the Albert-Brauer-Hasse-Noether theorem that $B$ exists and has degree $p$ over $k$ (see \cite[Theorem 2.1]{LMPT}.

Let $\mathcal{O}$ be a maximal order of $B$ and $\mathcal{O}^1$ denote the multiplicative subgroup of $\mathcal O^*$ consisting of the elements of $\mathcal O^*$ having reduced norm one. Since $B\otimes_\Q \R\cong \MM_p(\R)^{[k:\Q]}$ we obtain an embedding $B\hookrightarrow \SL_p(\R)^{[k:\Q]}$. Let $\Gamma$ denote the image in $\SL_p(\R)^{[k:\Q]}$ of $\mathcal O^1$ under this embedding. Let $\mathfrak X$ denote the symmetric space of $\SL_p(\R)^{[k:\Q]}$ and define $M:= \mathfrak X/\Gamma$ to be the arithmetic orbifold associated to $\Gamma$.

Let $\gamma\in \Gamma$ be a semisimple element whose translation length $\ell(\gamma)$ realizes the systole of $M$. As a consequence of Proposition \ref{systoleupperbound} we have the inequality

\begin{equation}\label{upperbound1}
\ell(\gamma) \leq c_1\log(\Vol(\mathfrak X/\Gamma))
\end{equation}

for some positive constant $c_1$ depending only on $\mathfrak X$ (and thus only on $p$ and the degree of $k$). 

We now obtain an upper bound for $\Vol(\mathfrak X/\Gamma)$ using \cite[Proposition 3.2]{LMM}:

\begin{equation}\label{upperbound2}
\Vol(\mathfrak X/\Gamma) \leq c_2 d_k^{\frac{p^2-1}{2}} \left(\prod_{i=1}^s N_{k/\Q}(\mathfrak p_i)\right)^{\frac{p(p-1)}{2}}
\end{equation}
 
for some positive constant $c_2$ which is explicitly given in \cite[Proposition 3.2]{LMM} and depends only on $p$ and the degree of $k$. (That the dependence only depends on the degree of $k$ follows from the trivial bound $\zeta_k(s) \leq \zeta(s)^{[k:\Q]}$.)

We now need a lower bound for $\ell(\gamma)$. To do so we will employ the bound given in Theorem \ref{thm:sarasbounds}. There is a potential issue however. The hypothesis of Theorem \ref{thm:sarasbounds} is that the semisimple element $\gamma$ lies in $\SL_n(\R)$ for some positive integer. Our element $\gamma$, on the other hand, lies in $\SL_p(\R)^{[k:\Q]}$. To resolve this we use the fact that $\SL_p(\R)^{[k:\Q]}$ is a Lie subgroup of $\SL_{p[k:\Q]}(\R)$ and apply Theorem \ref{thm:sarasbounds} with $n=p[k:\Q]$:

\begin{equation}\label{lowerbound1}
\ell(\gamma) \geq 2\sqrt{\frac{2}{p[k:\Q]}} \log(M(p_\gamma)).
\end{equation}

Let $L=k(\gamma)$ be the extension of $k$ generated by adjoining to $k$ a preimage in $B$ of $\gamma$. This extension has degree $p$ and, by Theorem 32.15 of \cite{Reiner}, for every prime $\mathfrak P$ of $L$ lying above a prime $\mathfrak p\in\{\mathfrak p_1,\dots, \mathfrak p_s\}$, satisfies $[L_{\mathfrak{P}}:k_{\mathfrak p}]=p$. In particular none of the primes $\mathfrak p_i$ split completely in $L/k$.

In order to relate the relative discriminant $\Delta_{L/k}$ of $L$ over $k$ we will make use of a result of Silverman \cite[Theorem 2]{silverman}:

\begin{equation}\label{lowerbound2}
\log(M(p_\gamma)) \geq \frac{1}{2p(p-1)}\log(N_{k/\Q}(\Delta_{L/k})) - \frac{p\log(p)}{2(p-1)}.
\end{equation}

By combining (\ref{lowerbound1}) and (\ref{lowerbound2}) we obtain positive constants $c_3, c_4$ depending only on $p$ and the degree of $k$ such that

\begin{equation}\label{lowerbound3}
\ell(\gamma) \geq c_3+c_4\log(N_{k/\Q}(\Delta_{L/k})).
\end{equation}

Finally, by combining (\ref{upperbound1}), (\ref{upperbound2}) and (\ref{lowerbound3}) we obtain positive constants $c_5,c_6,c_7$ such that 

\[N_{k/\Q}(\Delta_{L/k}) \leq c_5 d_k^{c_6} \left(\prod_{i=1}^s N_{k/\Q}(\mathfrak p_i)\right)^{c_7},\]

which finishes the proof of Theorem \ref{thm:effectivegw}.

%
%
%
%



\end{document}